\newcommand{\FF}{{\mathbb{F}}}
\newcommand{\NN}{{\mathbb{N}}}
\newcommand{\QQ}{{\mathbb{Q}}}
\newcommand{\ZZ}{{\mathbb{Z}}}
\newcommand{\fS}{{\mathfrak{S}}}
\newcommand{\cK}{{\mathcal{K}}}
\newcommand{\cO}{{\mathcal{O}}}
\newcommand{\Aut}{{\operatorname{Aut}}}
\newcommand{\Gal}{{\operatorname{Gal}}}
\newcommand{\Sp}{{\operatorname{Sp}}}
\newcommand{\sgn}{{\operatorname{sgn}}}
\newcommand{\rnk}{{\operatorname{rnk}}}
\newcommand{\Cl}{{\operatorname{Cl}}}
\newcommand{\id}{{\operatorname{id}}}
\newcommand{\pr}{{\operatorname{pr}}}
\newcommand{\zf}{\cdot10^5}
\newcommand{\zs}{\cdot10^6}
\newcommand\e{{\text{E--}}}
\let\ti=\times
\let\vhi=\varphi
\newtheorem{thm}{Theorem}[section]
\newtheorem{conj}[thm]{Conjecture}
\newtheorem{prop}[thm]{Proposition}
\theoremstyle{definition}
\theoremstyle{remark}
\numberwithin{thm}{section} 
\numberwithin{equation}{section}
\begin{document}

\title{On the distribution of class groups of number fields}

\date{\today}

\author{Gunter Malle}
\address{FB Mathematik, Universit\"at Kaiserslautern,
Postfach 3049, D--67653 Kaisers\-lautern, Germany.}
\makeatletter
\email{malle@mathematik.uni-kl.de}
\makeatother

\subjclass[2000]{Primary 11R29, Secondary 11R16}

\begin{abstract}
We propose a modification of the predictions of the Cohen--Lenstra heuristic
for class groups of number fields in the case where roots of unity are
present in the base field. As evidence for this modified formula we provide
a large set of computational data which show close agreement. Furthermore,
our predicted formula agrees with results on class groups of function fields
in positive characteristic for which the base field contains appropriate
roots of unity.

\end{abstract}

\maketitle

\pagestyle{myheadings}
\markboth{Gunter Malle}{On the distribution of class groups of number fields}
%%\markboth{Gunter Malle}{}

%%%%%%%%%%%%%%%%%%%%%%%%%%%%%%%%%%%%%%%%%%%%%%%%%%%%%%%%%%%%%%%%%%%%%%%%%
\section{Introduction} \label{sec:intro}
The distribution of class groups of number fields remains mysterious. The
Cohen--Lenstra--philosophy, extended by Cohen--Martinet \cite{CM2} gives a
heuristic approach with very precise predictions which is widely expected to
be accurate, but only very few isolated instances have been proved. Recently,
though, we presented computational evidence \cite{Ma} that the
Cohen--Lenstra--heuristic fails for the $p$-part of class groups in the
presence of $p$th roots of unity in the base field. In particular it never
seems to apply for the case $p=2$.  \par
Here, we propose a modified prediction in this case, and
present various computational data in support of this new formula. \par
In Section~\ref{sec:func} we compare our prediction with results on class
groups of function fields, which are related to the distribution of elements
in finite symplectic groups with given eigenspace for the eigenvalue~1. \par

In order to explain our computational results, let's consider
a {\em situation} $\Sigma:=(G,K_0,\sigma)$ consisting of a number
field $K_0$, a transitive permutation group $G$ of degree $n\ge2$, and a
possible signature $\sigma$ of a degree~$n$ extension $K/K_0$ with Galois
group (of the Galois closure) permutation isomorphic to $G$. For such a
situation $\Sigma$, let $\cK(\Sigma)$ denote the set of degree~$n$ extensions
$K/K_0$ of $K_0$ (inside a fixed algebraic closure of $\QQ$) with Galois
group $G$
and signature $\sigma$. We are interested in the structure of the relative
class group $\Cl(K/K_0)$ of $K/K_0$ for $K\in\cK(\Sigma)$ (the kernel in
the class group $\Cl_K$ of the norm map from $K$ to $K_0$). \par
Here we present numerical data for the distribution of $p$-parts of class
groups for the following situations $\Sigma$ and primes $p$:
\begin{itemize}
 \item[(1)] $\Sigma=(C_2, \QQ(\sqrt{-3}), \text{complex})$, $p=3$,
 \item[(2)] $\Sigma=(C_2, \QQ(\mu_5), \text{complex})$, $p=5$,
%% \item[(3)] $\Sigma=(C_2, \QQ(\mu_9), \text{complex})$, $p=3$,
 \item[(3)] $\Sigma=(\fS_3, \QQ, \text{totally real})$, $p=2$,
 \item[(4)] $\Sigma=(C_3, \QQ, \text{totally real})$, $p=2$,
 \item[(5)] $\Sigma=(C_3, \QQ(\sqrt{-3}), \text{complex})$, $p=2$,
 \item[(6)] $\Sigma=(C_3, \QQ(\sqrt{5}), \text{totally real})$, $p=2$,
 \item[(7)] $\Sigma=(C_3, \QQ(\sqrt{-1}), \text{complex})$, $p=2$,
 \item[(8)] $\Sigma=(D_5, \QQ, \text{complex})$, $p=2$,
 \item[(9)] $\Sigma=(D_5, \QQ, \text{real})$, $p=2$.
\end{itemize}

%%%%%%%%%%%%%%%%%%%%%%%%%%%%%%%%%%%%%%%%%%%%%%%%%%%%%%%%%%%%%%%%%%%%%%%%%
\section{Class groups in the presence of $p$th roots of unity} \label{sec:conj}

We begin by recalling the setting and the fundamental heuristic assumption in
the paper of Cohen and Martinet \cite{CM2}.

Let $K_0$ be a number field, $K_1/K_0$ a finite extension, $L/K_0$ its Galois
closure with Galois group $G=\Gal(L/K_0)$. (All number fields here are taken
inside a fixed algebraic closure of $\QQ$.) Then $G$ acts on the different
embeddings of $K_1$ into $L$ by the transitive permutation representation on
its subgroup $\Gal(L/K_1)$. The corresponding permutation character $\chi$
contains the trivial character $1_G$ exactly once, and we let
$\chi_1:=\chi-1_G$.
Let $\QQ[G]$ denote the rational group ring of the Galois group $G$. We make
the following two assumptions, which will be satisfied in all examples
considered:
\begin{itemize}
 \item[(1)] $\chi_1$ is the character of an irreducible (but not necessarily
  absolutely irreducible) $\QQ[G]$-module,
 \item[(2)] any absolutely irreducible constituent $\vhi$ of $\chi_1$ has
  Schur index~1, that is, $\vhi$ is the character of a representations of $G$
  over the field of values of $\vhi$.
\end{itemize}
Note that (1) implies in particular that $\Gal(L/K_1)$ is a maximal subgroup
of $G$, or equivalently that the extensions $K_1/K_0$ is simple. We write
$\cO$ for the ring of integers of the field of values of any absolutely
irreducible constituent $\vhi$ of $\chi_1$. (This is an abelian, hence normal
extension of $\QQ$, and thus independent of the choice of constituent $\vhi$
by~(1) above.)
\par
Denote by $E_L$ the group of units of the ring of integers of $L$. Then the
action of $G$ makes $E_L\otimes_\ZZ\QQ$ into a $\QQ[G]$-module, whose character
we denote by $\chi_E$. (It can be computed explicitly in terms of the
signature $\sigma$ of $L/K_0$ by the theorem of Herbrandt, see
\cite[Th.~6.7]{CM2}.) We set
$$u:=\langle\chi_E,\vhi\rangle$$
(see \cite[p.63]{CM2}), the scalar product of the character $\chi_E$ with an
absolutely irreducible constituent $\vhi$ of $\chi_1$. Since $\chi_E$ is
rational, this does not depend on the choice of $\vhi$.
\par
Let's denote by $\cK(\Sigma)$, where $\Sigma=(G,K_0,\sigma)$, the set of number
fields $K/K_0$ with signature $\sigma$ and Galois group of the Galois closure
permutation isomorphic to $G$. Note that both the isomorphism type of the
$\QQ[G]$-module $E_L\otimes\QQ$ and the integer $u$ only depend on the
situation $\Sigma$, not on $K_1$ or $L$. We are interested in the distribution
of relative class groups of fields in $\cK(\Sigma)$. 

By the fundamental assumption of \cite[Hyp.~6.6]{CM2} there should be a notion
of \emph{good primes} for $\Sigma$, including in particular all primes not
dividing $|G|$, and maybe even those not dividing the permutation degree of
$G$, such that whenever $p$ is good for $\Sigma$ and $u\ge1$, then a given
finite $p$-torsion $\cO$-module $H$ should occur as Sylow $p$-subgroup
of a class group $\Cl(K/K_0)$ for $K\in\cK(\Sigma)$ with probability
$$\frac{c}{|H|^u\,|\Aut_\cO(H)|}$$
for some constant $c$ only depending on $p$ and $\Sigma$
(see \cite[Th.~5.6(ii)]{CM2}).

The computational data obtained in \cite{Ma} indicated that this latter
assertion is probably wrong for primes $p$ such that $K_0$ contains $p$th
roots of unity, that is, such primes are not good for $\Sigma$. Based on
further and more extensive computations, instead we propose a modified formula
at least for the case when no $p^2$th roots of unity lie in $K_0$, and
$\cO=\ZZ$:

\begin{conj}  \label{conj:main}
 Assume that $p$ does not divide the permutation degree of $G$ and that $K_0$
 contains the $p$th but not the $p^2$th roots of unity. Then a given finite
 $p$-group $H$ of $p$-rank $r$ occurs as Sylow $p$-subgroup of a relative
 class group $\Cl(K/K_0)$ for $K\in \cK(\Sigma)$ with probability
 $$c\, \frac{\prod_{i=1}^{r+u}(p^i-1)}{p^{r(u+1)}}\cdot
   \frac{1}{|H|^u\,|\Aut(H)|}\,,$$
 where 
 $$c=\frac{1}{\prod_{i=u+1}^\infty(1+p^{-i})}=
     \frac{(p^2)_u(p)_\infty}{(p)_u(p^2)_\infty}$$
 and $u=u(\Sigma)$ is as introduced above.
\end{conj}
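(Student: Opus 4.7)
Since Conjecture~\ref{conj:main} strengthens a heuristic that is itself open in most cases, I do not expect a full proof, but the plan is to reduce the statement to a tractable linear-algebra computation by identifying the correct random model for the Sylow $p$-subgroup of $\Cl(K/K_0)$ in the presence of $\mu_p\subset K_0$. The physical input absent from the Cohen--Martinet setup is an alternating pairing coming from the Hilbert symbol, and once this pairing is recognised the modifying factor in the conjectured density is forced by counting.

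The first concrete step is to produce this pairing explicitly. Using Kummer theory for $K/K_0$ together with the Hilbert symbol, one obtains a canonical $G$-equivariant alternating $\FF_p$-valued pairing on the $p$-torsion of $\Cl(K/K_0)$; the Schur-index~$1$ hypothesis of Section~\ref{sec:conj} lets one restrict cleanly to the $\vhi$-isotypic component. I would then identify the radical of this pairing via the unit cohomology of $L$ and verify that, after isotypic projection, its $\FF_p$-dimension equals $u=\langle\chi_E,\vhi\rangle$. The upshot is that the correct random object attached to $K\in\cK(\Sigma)$ is not the abelian $p$-group $H$ alone, but the pair $(H,\omega)$ with $\omega$ alternating of corank~$u$.

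The second step is purely combinatorial. Mirroring Friedman--Washington, I would postulate that $(H,\omega)$ is Haar-distributed among corank-$u$ alternating operators on $\ZZ_p^N$ in the limit $N\to\infty$, and compute the resulting density at a fixed $(H,\omega)$. The probability should factor as a count of compatible nondegenerate alternating forms on the appropriate $(r+u)$-dimensional $\FF_p$-space, divided by $|H|^u\,|\Aut(H)|$. Direct enumeration of such forms, modulo the induced $\Aut(H)$-action, is expected to yield exactly the factor $\prod_{i=1}^{r+u}(p^i-1)/p^{r(u+1)}$; imposing total mass then pins $c$ down as $\prod_{i\ge u+1}(1+p^{-i})^{-1}$, and the equivalent Pochhammer form $(p^2)_u(p)_\infty/((p)_u(p^2)_\infty)$ follows from telescoping. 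The hypothesis that $p$ does not divide the permutation degree of $G$ is what licenses the Haar-random input at this stage.

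The main obstacle is arithmetic: one must show that, as $K$ ranges through $\cK(\Sigma)$ ordered by conductor, the pair $(\Cl(K/K_0)_p,\omega)$ equidistributes according to the Haar measure above. This equidistribution is out of reach by present methods, the few known exceptions all being in low rank. A realistic intermediate target is the function-field analog of Section~\ref{sec:func}: there the relevant statistics reduce to the distribution of $1$-eigenspaces in finite symplectic groups, which can be computed exactly, and matching Conjecture~\ref{conj:main} in that setting would confirm that the corank-$u$ alternating model is the right one, reducing the number-field case to the customary analogy between the two worlds.
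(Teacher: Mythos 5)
The statement you have set out to prove is a \emph{conjecture}: the paper contains no proof of it, only supporting evidence, namely the numerical data of Sections~4--7 and the consistency check of Proposition~3.1, where the $g\to\infty$ limit of the Rudvalis--Shinoda/Fulman formulas for the dimension of the $1$-eigenspace of a random element of $\Sp_{2g}(q)$ is shown to agree with the rank distribution of Proposition~\ref{prop:rankmoment} in the case $u=0$. So there is no paper proof to compare with, and your text is, as you concede, a modelling program rather than a proof.

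Within that program the decisive steps are asserted rather than carried out, and one of them is exactly the point the paper flags as open. Your second step claims that a Haar-random ``corank-$u$ alternating'' model yields precisely the density $c\,\prod_{i=1}^{r+u}(p^i-1)\,p^{-r(u+1)}/(|H|^u|\Aut(H)|)$; this enumeration (the measure of alternating $p$-adic matrices with prescribed cokernel and corank, modulo the $\Aut(H)$-action) is never performed, and the author states after Proposition~3.1 that he has ``not (yet) been able to match'' the function-field/symplectic distribution with Conjecture~\ref{conj:main} beyond the $u=0$ rank statistics --- so the identity you are counting on is precisely the unresolved matching problem, and it would at least have to be checked against the moments $\prod_{k=1}^n(1+p^{k-u-1})$ of Proposition~\ref{prop:rankmoment} and against the stated value of $c$. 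Your first step also needs real work: producing a $G$-equivariant alternating pairing on $\Cl(K/K_0)[p]$ via Kummer theory/Hilbert symbols and showing its ($\vhi$-isotypic) radical has dimension exactly $u=\langle\chi_E,\vhi\rangle$ is nontrivial and unproved, and note that the conjecture as stated concerns $\cO=\ZZ$, with the $\cO\ne\ZZ$ variant only given as the separate formula~\eqref{eq:conj2}; moreover the restriction ``$\mu_p\subset K_0$ but $\mu_{p^2}\not\subset K_0$'' must enter your model somewhere, since the paper reports (Section~\ref{sec:6.4}) that the Sylow distribution visibly changes when $\mu_{p^2}\subset K_0$ even though the rank distribution appears stable --- a pure corank-$u$ alternating model that never sees this distinction cannot be the whole story. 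Finally, the arithmetic equidistribution over $K\in\cK(\Sigma)$ is, as you say, out of reach, so even granting the model the statement would remain conjectural. As a research plan this is sensible and in the spirit of Section~\ref{sec:func}; as a proof it has gaps at every load-bearing point.
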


Here, for $q,k\in\NN$ we let
$$(q)_k:=\prod_{i=1}^k(1-q^{-i}),\qquad
  (q)_\infty:=\prod_{i=1}^\infty(1-q^{-i}).$$

\begin{prop}  \label{prop:rankmoment}
 Assume that the Sylow $p$-subgroups of class groups $\Cl(K/K_0)$ for
 $K\in \cK(\Sigma)$ are distributed according to Conjecture~\ref{conj:main}.
 Then the probability that $\Cl(K/K_0)$ has $p$-rank equal to $r$ is given by
 $$\pr(\rnk_p(\Cl(K/K_0))=r)=\frac{(p^2)_u(p)_\infty}{(p)_u(p^2)_\infty}\cdot
   \frac{1}{p^{r(r+2u+1)/2}(p)_r}$$
 with $n$th higher moments
 $$\prod_{k=1}^n(1+p^{k-u-1})\qquad\qquad(n\in\NN).$$
\end{prop}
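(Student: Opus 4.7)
The argument proceeds in two steps.

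\emph{Rank distribution.} I would sum the conjectural probability from Conjecture~\ref{conj:main} over all finite abelian $p$-groups $H$ of $p$-rank exactly $r$. Since the prefactor $c\prod_{i=1}^{r+u}(p^i-1)/p^{r(u+1)}$ depends only on $r$, this reduces matters to evaluating the Cohen--Lenstra-style weighted sum
$$S_r := \sum_{\rnk_p(H)=r}\frac{1}{|H|^u\,|\Aut(H)|}.$$
Parameterizing abelian $p$-groups of rank $r$ by partitions $\lambda=(\lambda_1\ge\cdots\ge\lambda_r\ge1)$ via $H_\lambda=\bigoplus_i\ZZ/p^{\lambda_i}\ZZ$ and substituting Hall's formula for $|\Aut(H_\lambda)|$ in terms of the conjugate partition $\lambda'$ and the part multiplicities $m_k(\lambda)$, I would collapse $S_r$ to a closed form in the symbols $(p)_r$ and $(p)_{r+u}$. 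Combining with the prefactor and simplifying then yields the claimed formula for $\pr(\rnk_p(\Cl(K/K_0))=r)$.

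\emph{Moments.} With the rank distribution in hand, using $p^{r(r+2u+1)/2}=p^{r(r+1)/2+ur}$, one computes
$$E\bigl[p^{n\cdot\rnk_p(\Cl(K/K_0))}\bigr] = c\sum_{r\ge 0}\frac{p^{(n-u)r}\,p^{-r(r+1)/2}}{(p)_r}.$$
By Euler's identity (in the form $\sum_{r\ge 0}x^r p^{-r(r+1)/2}/(p)_r=\prod_{k\ge 1}(1+xp^{-k})$) applied with $x=p^{n-u}$, this sum evaluates to $\prod_{j\ge u-n+1}(1+p^{-j})$. Dividing by $1/c=\prod_{j\ge u+1}(1+p^{-j})$ leaves the telescoping product $\prod_{j=u-n+1}^u(1+p^{-j})$, which after reindexing $j\mapsto u-k+1$ is exactly $\prod_{k=1}^n(1+p^{k-u-1})$.

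The main obstacle is the first step: while the unrestricted Cohen--Lenstra sum $\sum_H(|H|^u|\Aut(H)|)^{-1}$ is classical and is evaluated in \cite{CM2}, restricting to $p$-groups of rank exactly $r$ requires careful bookkeeping with Hall's formula and the partition statistics $\sum_i(\lambda'_i)^2$ and $m_k(\lambda)$; one has to verify that the numerator $\prod_{i=1}^{r+u}(p^i-1)$ from Conjecture~\ref{conj:main} combines with the denominators produced by Hall's formula to give precisely $1/(p^{r(r+2u+1)/2}(p)_r)$. The second step is then a routine $q$-series manipulation.
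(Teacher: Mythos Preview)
Your plan is correct and is exactly what the paper has in mind: the paper's entire proof is the single sentence ``This follows easily as in \cite[Lemma~2.1 and~2.2]{Ma},'' and your two steps (summing the conjectural weight over rank-$r$ abelian $p$-groups via Hall's automorphism formula, then extracting moments via Euler's product identity) are precisely the content of those two lemmas. Your moment computation is carried out cleanly and checks; for the rank step you correctly identify that the only real work is the evaluation of $S_r=\sum_{\rnk_p H=r}(|H|^u|\Aut H|)^{-1}$, which is indeed the computation performed in \cite[Lemma~2.1]{Ma} and which combines with the prefactor $\prod_{i=1}^{r+u}(p^i-1)/p^{r(u+1)}$ to give $1/(p^{r(r+2u+1)/2}(p)_r)$ as claimed.
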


\begin{proof}
This follows easily as in \cite[Lemma~2.1 and~2.2]{Ma}.
\end{proof}

Computationally, only very few cases with $\cO\ne\ZZ$ are in reach. The data
obtained there seem to indicate the following generalization of the above
formula: Assume that $p$ is good for $\Sigma$ and that $K_0$ contains the
$p$th but not the $p^2$th roots of unity. Then a given finite $p$-torsion
$\cO$-module $H$ of $\cO$-rank $r$ should occur as Sylow $p$-subgroup of a
class group with probability
\begin{equation}\label{eq:conj2}
  c\,\frac{d^r\prod_{i=1}^{r+u}(q^i-1)}{q^{r(u+1)}}\cdot
   \frac{1}{|H|^u\,|\Aut_\cO(H)|}\end{equation}
for some constant $c$ only depending on $p$ and $\Sigma$, where
$q:=|\cO/p\cO|$ and $d=(\cO:\ZZ)$.

Computations for cases where the base field contains the $p^2$th roots of unity,
for example on the 2-parts of class groups of cubic extensions of the field
of fourth roots of unity (see Section~\ref{sec:6.4}), or on the 3-parts of class
groups of quadratic extensions of the field of ninth roots of unity, show that
while the distribution of $p$-ranks might still be given as in
Proposition~\ref{prop:rankmoment} the distribution of Sylow $p$-subgroups
seems to be different from the formula given in Conjecture~\ref{conj:main}.
We hope to come back to this question in some future investigation.

%%We note, however, that possibly the factor
%%$d^r$ in the numerator has to be replaced by some other quantity.

In Sections~\ref{sec:S2I3}--\ref{sec:D5}, we consider several instances of
situations $\Sigma$, for which we specialize the conjecture and give supporting
computational data.

%%%%%%%%%%%%%%%%%%%%%%%%%%%%%%%%%%%%%%%%%%%%%%%%%%%%%%%%%%%%%%%%%%%%%%%%%
\section{Relation with class groups of function fields} \label{sec:func}

First, we compare our new formula to results and heuristics for class groups
of global fields in positive characteristic, that is, function fields over
finite fields $\FF$. Here, the base field contains $p$th roots of unity
if $p$ divides $|\FF|-1$.
Now for a prime power $q$ and for $g,r\ge0$, let
$$\alpha_q(g,r):=\frac{|\{x\in\Sp_{2g}(q)\mid
  \dim(\ker(x-\id))=r\}|}{|\Sp_{2g}(q)|},$$
where $\Sp_{2g}(q)$ denotes the symplectic group of dimension~$2g$ over
$\FF_q$. In \cite[Thm.~3.1]{Ac06} Achter proves that the probability for the
class group of a function field over $\FF$ of genus~$g$ to have $p$-rank~$r$
converges to $\alpha_q(g,r)$ as $|\FF|\rightarrow\infty$ with $p$ dividing
$|\FF|-1$ (see also \cite[Thm.~3.1]{Ac08}). Achter \cite[Lemma 2.4]{Ac06}
shows that $\alpha_q(g,r)$ has a limit for $g\rightarrow\infty$. Here we give
an explicit values for this limit, using the explicit formulas for
$\alpha_q(g,r)$ which were obtained by Rudvalis and Shinoda, see
\cite[Cor.~1]{Fu00}:

\begin{prop}
 For any prime power $q$, and $r\ge0$ we have
 $$\lim_{g\rightarrow\infty}\alpha_q(g,r)
   =\frac{(q)_\infty}{(q^2)_\infty}\cdot\frac{1}{q^{r(r+1)/2}(q)_r}.$$
 Thus, the distribution of elements in $\Sp_{2g}(p)$ according to the
 dimension of their eigenspace for the eigenvalue~1 converges to the
 conjectured distribution of $p$-ranks of class groups in
 Proposition~\ref{prop:rankmoment} for unit rank $u=0$.
\end{prop}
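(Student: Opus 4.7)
The plan is to extract the limit directly from the explicit Rudvalis--Shinoda formula cited as \cite[Cor.~1]{Fu00}. That formula expresses $\alpha_q(g,r)$ as a finite sum of rational functions in $q^{-1}$ built from $q$-Pochhammer symbols $(q)_{a}$ and $(q^2)_{b}$ whose indices grow with~$g$. I would first rewrite this expression so as to pull out the factor $1/(q^{r(r+1)/2}(q)_r)$ already present in the target, reducing the problem to identifying the limit of the remaining finite sum with $(q)_\infty/(q^2)_\infty$.

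Next I would pass to the termwise limit $g\to\infty$. Each factor of the form $(q)_{g-c}$ or $(q^2)_{g-c}$ tends respectively to $(q)_\infty$ or $(q^2)_\infty$; although the number of summands grows with~$g$, their tails are dominated uniformly by a convergent geometric series in~$q^{-1}$, so termwise passage to the limit is justified by dominated convergence. The result is an explicit infinite $q$-series, which I would then evaluate using a standard identity from the theory of basic hypergeometric series --- essentially a consequence of the Jacobi triple product, or equivalently of Euler's classical partition identities --- to obtain the factor $(q)_\infty/(q^2)_\infty$. As a sanity check, the case $r=0$ recovers the well-known fact that the proportion of elements of $\Sp_{2g}(q)$ not having~$1$ as an eigenvalue tends to $(q)_\infty/(q^2)_\infty$.

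For the second assertion of the proposition, I would substitute $u=0$ into Proposition~\ref{prop:rankmoment}: since $(p)_0=(p^2)_0=1$, the formula there collapses to
$$\frac{(p)_\infty}{(p^2)_\infty}\cdot\frac{1}{p^{r(r+1)/2}(p)_r},$$
which matches the limit just computed. Combined with Achter's theorem \cite[Thm.~3.1]{Ac06}, this yields the asserted agreement with the distribution of $p$-ranks of function-field class groups.

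The main obstacle lies in the $q$-series calculation of the second paragraph: one has to massage the truncated Rudvalis--Shinoda sum into a form to which a classical identity applies, while carefully book-keeping which Pochhammer symbols stabilize as $g\to\infty$ and which contribute only to the tail. Once the correct reindexing is found, the evaluation itself is routine, and the remaining steps (termwise convergence and the consistency check with Proposition~\ref{prop:rankmoment}) are straightforward.
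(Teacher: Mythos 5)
Your proposal is correct and follows essentially the same route as the paper: both start from the explicit Rudvalis--Shinoda formula in \cite[Cor.~1]{Fu00}, pull out the factor $1/(q^{r(r+1)/2}(q)_r)$ via $|\Sp_{2k}(q)|$, pass to the limit $g\to\infty$ term by term, and evaluate the resulting series by Euler's identity (cited in the paper as \cite[Cor.~2.2]{An}), then check that $u=0$ in Proposition~\ref{prop:rankmoment} gives the same expression. The only cosmetic differences are that the paper treats even and odd $r$ separately, since Fulman's formula does, and records a typo correction ($\binom{i}{2}$ versus $\binom{i+1}{2}$) in the cited formula.
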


\begin{proof}
First assume that $r=2k$ is even. By \cite[Cor.~1]{Fu00} and using
$$|\Sp_{2k}(q)|=q^{k^2}\prod_{i=1}^k(q^{2i}-1)$$
we have
$$\begin{aligned}
  \alpha_q(g,2k)&=\frac{1}{|\Sp_{2k}(q)|}\sum_{i=0}^{g-k}\frac{(-1)^iq^{i(i+1)}}
  {|\Sp_{2i}(q)|q^{2ik}}\\
  &=\frac{1}{|\Sp_{2k}(q)|}
   \sum_{i=0}^{g-k}\frac{(-1)^iq^{-i^2-2ik}}{(1-q^{-2})\cdots(1-q^{-2i})}\\
  &=\frac{1}{|\Sp_{2k}(q)|}\left(1+
   \sum_{i=1}^{g-k}\frac{(q^2)^{-\binom{i}{2}} (-q)^{i(2k+1)}}
   {(1-q^{-2})\cdots(1-q^{-2i})}\right).
\end{aligned}$$
(Note that the exponent $\binom{i}{2}$ at $q^2$ in the numerator in
loc.~cit.~should correctly read $\binom{i+1}{2}$.)
For $g\rightarrow\infty$, the latter converges to
$$\frac{1}{|\Sp_{2k}(q)|}{\prod_{i=k}^\infty(1-q^{-2i-1})}$$
by \cite[Cor.~2.2]{An}. A trivial rewriting gives the value stated in the
conclusion. \par
For odd $r=2k+1$ we have
$$\alpha_q(g,2k+1)=\frac{1}{q^{2k+1}|\Sp_{2k}(q)|}
  \sum_{i=0}^{g-k-1}\frac{(-1)^iq^{i(i+1)}}{|\Sp_{2i}(q)|q^{2i(k+1)}}$$
by \cite[Cor.~1]{Fu00} (again with the corrected power of $q$ in the numerator),
so by a completely analogous calculation we find
$$\lim_{g\rightarrow\infty}\alpha_q(g,r)=
   \frac{1}{q^{2k+1}|\Sp_{2k}(q)|}\prod_{i=k+1}^\infty(1-q^{-2i-1})$$
from which the claim follows easily.
\end{proof}

It seems tempting to speculate that even the distribution of class groups in
the number field case is as given in \cite{Ac06,Ac08} for the corresponding
function field case in the presence of roots of unity. We have not (yet) been
able to match that with our Conjecture~\ref{conj:main}.

%%%%%%%%%%%%%%%%%%%%%%%%%%%%%%%%%%%%%%%%%%%%%%%%%%%%%%%%%%%%%%%%%%%%%%%%%
\section{Quadratic extensions and odd primes $p$}\label{sec:S2I3}
We now turn to experimental evidence for Conjecture~\ref{conj:main}.
Our first set of examples concerns the $p$-part of class groups of quadratic
extensions of a number field containing the $p$th roots of unity, where
$p=3$ or $p=5$. Here, in the notation of the previous section $G=Z_2$ is of
order~2, and $\chi_1=\sgn$ is its non-trivial linear character, the sign
character of $\fS_2=Z_2$.
\subsection{Quadratic extensions of $\QQ(\sqrt{-3})$}
The smallest such situation occurs for quadratic extensions of the field
$K_0:=\QQ(\sqrt{-3})$ of third roots of unity. Here, $K_0$ has a unique place
at infinity, and Herbrandt's formula gives $\chi_E=\sgn$, so
$u=1$. The prime $p=3$ is good for this situation, but since the third roots
of unity are present, we expect the Cohen--Lenstra--Martinet heuristic to fail.
In \cite[(3)]{Ma} we already proposed that the distribution of 3-ranks $r$ of
class groups should be given by
\begin{equation}\label{eq:3u=1r}
  \pr(\rnk_3(\Cl_K)=r)=\frac{4}{3}\cdot\frac{(3)_\infty}{(9)_\infty}
  \cdot\frac{1}{3^{r(r+3)/2}(3)_r}\end{equation}
with higher moments
$$\prod_{k=1}^n(1+3^{k-2})$$
(see Proposition~\ref{prop:rankmoment}).
According to Cohen et al. \cite[Cor.~1.3]{CDO} asymptotically the number of
quadratic extensions of $K_0$ of discriminant at most $X$ grows linearly
with $X$, with proportionality factor $0.02613532018111...$. Those extensions
which are Galois over $\QQ$ have density~0,
so generically, the Galois closure over $\QQ$ has dihedral Galois group $D_4$
of order~8. In particular, generically the quadratic extensions come in pairs
with the same Galois closure over $\QQ$. So we expect to find roughly
$0.01306766\,X$ quartic extensions of $\QQ$ with intermediate field
$K_0$ and of discriminant at most $X$. \par
Extending the data presented in \cite[Tab.~9]{Ma} we have compiled lists $S$
consisting of the first $|S|$ quadratic extensions of $K_0$ of discriminant
at least $D$, for various values of $D$. The numbers of fields obtained are
in very close accordance with the asymptotic formula derived above. In
Table~\ref{tab:3-ranks} we give the results of
our computations of 3-ranks for these fields. Visibly, the data fit the
prediction in~\eqref{eq:3u=1r} quite closely. \par

\begin{table}[htbp]
\caption{$C_2$-fields over $\QQ(\sqrt{-3})$: 3-ranks and higher moments}
  \label{tab:3-ranks}
$\begin{array}{cc|rrrr|rrr}
 D& |S|& r=0& 1& 2& 3& n=1& 2& 3\cr
\hline
 \ge10^{16}& 2\zs& .8528& .141& .0058& .71\e4& 1.331& 2.648& 10.55\cr
 \ge10^{20}& 4\zs& .8521& .142& .0059& .68\e4& 1.333& 2.656& 10.43\cr
 \ge10^{24}& 2\zf& .8525& .142& .0057& .80\e4& 1.331& 2.650& 10.43\cr
\hline
\span\text{formula \eqref{eq:3u=1r}}& .8520& .142& .0059& .76\e4& 1.333& 2.667& 10.67\cr
\span\text{CL-prediction}& .8402& .158& .0023& .33\e5& 1.333& 2.444& 6.81\cr
\end{array}$
\end{table}

Conjecture~\ref{conj:main} now predicts more precisely that a 3-group $H$ of
3-rank $r$ occurs as Sylow 3-subgroup of a class group of a quadratic
extension of $\QQ(\sqrt{-3})$ with probability
\begin{equation}\label{eq:3u=1}
  2\cdot\frac{(3)_\infty}{(9)_\infty}\cdot
  \frac{3^{(r^2-r)/2}(3)_{r+1}}{|H|\cdot|\Aut(H)|}
\end{equation}
while the original Cohen--Lenstra--Martinet heuristic \cite{CM1,CM2} predicts
a relative frequency of
$$\frac{(3)_\infty}{(3)_1}\cdot\frac{1}{|H|\cdot|\Aut(H)|}.$$

The following Table~\ref{tab:syl3Z2} contains detailed statistics for the Sylow
3-subgroups for the same sets of data as in Table~\ref{tab:3-ranks} by giving
the quotient of the actual number of fields with given Sylow 3-subgroup
divided by the number expected according to formula~\eqref{eq:3u=1}. 

The last line of Table~\ref{tab:syl3Z2} lists the proportion predicted
by~\eqref{eq:3u=1}.
\vskip 1pc
\goodbreak

\begin{table}[htbp]
\caption{$C_2$-fields over $\QQ(\sqrt{-3})$: Sylow 3-subgroups}
\label{tab:syl3Z2}
$\begin{array}{l|rrrrrrrrr}
 D& 1& 3& 9& 3^2& 27& 9\ti3& 81& 27\ti3& 3^3\cr
\hline
 \ge10^{16}& 1.0009&  .995& .998& .981&  .991&  .944& .985& .954& .873\cr
 \ge10^{20}& 1.0001& 1.000& .997& .989& 1.016& 1.009& .999& .942& .858\cr
 \ge10^{24}& 1.0006& .998& 1.001& .983&  .943&  .913& .924& .720& 1.248\cr
\hline
\text{eq.~\eqref{eq:3u=1}}& .852& .126& .014& .0051& .0016& .75\e3& .17\e3& .83\e4& .64\e4\cr
%\text{\cite{CM1}}& .840& .140& .016& .0019& .0017& .29\e3& .19\e3& .32\e4& .28\e5& .24\e5\cr
\end{array}$
\end{table}

The table shows a remarkably good agreement with our prediction.

\subsection{Quadratic extensions of $\QQ(\sqrt{5})$ and of $\QQ(\sqrt{-1})$}
We have computed similar data as above for totally real quadratic extensions
of $\QQ(\sqrt{5})$, resp{}. of quadratic extensions of $\QQ(\sqrt{-1})$.
Here, according to \cite[Cor.~1.3]{CDO} the number of expected fields (over
$\QQ$) of discriminant at most $X$ should grow linearly, with proportionality
factor $0.001852542...$ respectively $0.008144834...$. In these situations,
$K_0$ does not contain the third roots of unity. Our results for $p=3$-parts
of class groups are in close agreement with the Cohen--Lenstra--Martinet
prediction, so we don't show the details.

\subsection{Quadratic extensions of $\QQ(\mu_5)$}
Our final set of examples in this section consists of quadratic extensions of
the field $K_0:=\QQ(\mu_5)$ of fifth roots of unity. Here, we expect the prime
$p=5$ to behave differently. The base field has two places at infinity, so
$\chi_E=1+2\,\sgn$ and $u=2$. According to Conjecture~\ref{conj:main} a 5-group
$H$ of 5-rank $r$ occurs as Sylow 5-subgroup of a class group of a quadratic
extension of $\QQ(\mu_5)$ with probability
\begin{equation}\label{eq:5u=2}
\frac{13}{8}\frac{(5)_\infty}{(25)_\infty}\frac{5^{(r^2-r)/2}(5)_{r+2}}
  {|H|^2\,|\Aut(H)|}.
\end{equation}
Thus, by Proposition~\ref{prop:rankmoment} the distribution of 5-ranks should
be given by
\begin{equation}\label{eq:5u=2r}
  \pr(\rnk_5(\Cl_K)=r)=\frac{156}{125}\cdot\frac{(5)_\infty}{(25)_\infty}\cdot
  \frac{1}{5^{r(r+5)/2}(5)_r}
\end{equation}
with higher moments
$$\prod_{k=1}^n(1+5^{k-3}),\qquad n=1,2,\ldots.$$
We have compiled lists $S$ of the first $10^5$ such extensions of discriminant
$D\ge 10^i$, $i=14,18,22$. Again by \cite[Cor.~1.3]{CDO}, the
number of such fields (over $\QQ$) of discriminant at most $X$ should equal
roughly
$$0.12444267...\cdot10^{-5}\,X,$$
which agrees closely with the numbers
obtained here. Table~\ref{tab:5-ranks} shows the distribution of 5-ranks for
these sets of fields, together with old and new prediction. The predictions
for rank~0 and rank~1 are very close together, but according
to~\eqref{eq:5u=2r}, rank~2 should occur about five times more frequently than
for the original prediction, which fits with the data. 

\begin{table}[htbp]
\caption{$C_2$-fields over $\QQ(\mu_5)$: 5-ranks and higher moments}
  \label{tab:5-ranks}
$\begin{array}{cc|rrr|rrr}
 D& |S|& r=0& 1& 2& n=1& 2& 3\cr
\hline
 \ge10^{14}& 10^5& .99089&  .91\e2& .10\e4& 1.0366& 1.2246& 2.285\cr
 \ge10^{18}& 10^5& .99052&  .95\e2& .10\e4& 1.0381& 1.2335& 2.330\cr
 \ge10^{22}& 10^5& .98987& 1.01\e2& .10\e4& 1.0407& 1.2491& 2.411\cr
\hline
\span\text{formula \eqref{eq:5u=2r}}&  .99008& .99\e2& .16\e4& 1.0400& 1.2480& 2.496\cr
\span\text{CL-prediction}& .99002& 1.00\e2& .33\e5& 1.0400& 1.2416& 2.290\cr
\end{array}$
\end{table}

The amount of data computed in this case is not sufficient to obtain reliable
results on the distribution of Sylow subgroups, so these are not shown.

%%%%%%%%%%%%%%%%%%%%%%%%%%%%%%%%%%%%%%%%%%%%%%%%%%%%%%%%%%%%%%%%%%%%%%%%%
\section{Non-Galois cubic fields}
A further interesting situation for our conjecture occurs for non-Galois cubic
extensions of $\QQ$ with the prime $p=2$.
\subsection{Totally real non-Galois cubic fields}
The number of totally real $\fS_3$-fields of discriminant at most $X$ is
expected to behave asymptotically as
$$ c_1\,X
  -c_2\frac{1}{\sqrt{3}+1}X^{5/6}+o(X^{1/2})$$
where
$$ c_1=0.06932561438172562...,\qquad c_2=0.403483636663946799...$$
(see Roberts \cite[Conj.~3.1]{Ro}). We have computed the first $10^6$ such
fields of discriminant at least $10^i$, where $11\le i\le17$.
Table~\ref{tab:anzS3} compares the actual number of $\fS_3$-fields of
discriminant~$D$ between $D_1\le D\le D_2$ with the number predicted by the
asymptotic formula.

\begin{table}[htbp]
\caption{Totally real $\fS_3$-fields: asymptotic vs{}. actual numbers }
  \label{tab:anzS3}
$\begin{array}{lc|rrrr|cccc}
 D_1& D_2& |S|& \text{expected}\cr
\hline
 10^{11}& 10^{11}+14816837& 10^6& 1\,000\,421\cr
 10^{12}& 10^{12}+14672596& 10^6&    999\,129\cr
 10^{13}& 10^{13}+14613109& 10^6& 1\,000\,810\cr
 10^{14}& 10^{14}+14544488& 10^6&    999\,997\cr
 10^{15}& 10^{15}+14467409& 10^6&    997\,331\cr
 10^{16}& 10^{16}+14496840& 10^6& 1\,001\,158\cr
 10^{17}& 10^{17}+14464985& 10^6& 1\,000\,181\cr
\cr
\end{array}$
\end{table}

For the totally real case the theorem of Herbrandt gives $u=2$. So
Proposition~\ref{prop:rankmoment} predicts the distribution
\begin{equation}\label{eq:2u=2r}
  \pr(\rnk_2(\Cl_K)=r)=\frac{15}{8}\cdot\frac{(2)_\infty}{(4)_\infty}
  \cdot\frac{1}{2^{r(r+5)/2}(2)_r}
\end{equation}
for the 2-ranks of class groups, with higher moments
$$ \prod_{k=1}^n(1+2^{k-3}),\qquad n=1,2,\ldots$$
(this was already  proposed in our previous paper \cite[(5)]{Ma}).
Computational data for this case which reach considerably farther than those
in \cite[Table 10]{Ma} are displayed in Table~\ref{tab:2-rnkS3r}.
 
\begin{table}[htbp]
\caption{Totally real $\fS_3$-fields: 2-ranks and higher moments }
  \label{tab:2-rnkS3r}
$\begin{array}{lr|rrrr|cccc}
 D& |S|& r=0& 1& 2& 3& n=1& 2& 3& 4\cr
\hline
%%\ge10^{11}&     10^6& .804& .183& .0124& .268\e3& 1.222& 1.75& 3.20& 8.07\cr
%%\ge10^{13}&     10^6& .795& .190& .0142& .388\e3& 1.236& 1.81& 3.45& 9.47\cr
%%\ge10^{15}&     10^6& .791& .193& .0153& .468\e3& 1.242& 1.84& 3.57& 9.98\cr
\ge10^{12}&     10^6& .798& .188& .0135& .354\e3& 1.231& 1.79& 3.35& 8.72\cr
\ge10^{14}&     10^6& .793& .192& .0149& .431\e3& 1.240& 1.83& 3.53& 9.84\cr
\ge10^{16}&     10^6& .789& .195& .0157& .507\e3& 1.246& 1.85& 3.64& 10.47\cr
\ge10^{17}&     10^6& .788& .195& .0158& .538\e3& 1.247& 1.86& 3.68& 10.76\cr
\hline
\span\text{formula \eqref{eq:2u=2r}}& .786& .197& .0164& .585\e3& 1.250& 1.87& 3.75& 11.25\cr
\span\text{CL-prediction}& .770& .220& .0098& .090\e3& 1.250& 1.81& 3.20& 7.18\cr
\cr
\end{array}$
\end{table}

Conjecture~\ref{conj:main} predicts that a 2-group $H$ of 2-rank $r$
occurs as Sylow 2-subgroup of a class group of a totally real non-Galois cubic
number field with probability
\begin{equation}\label{eq:2u=2}
  5\cdot\frac{(2)_\infty}{(4)_\infty}\cdot
  \frac{2^{(r^2-r)/2}(2)_{r+2}}{|H|^2\cdot|\Aut(H)|}.
\end{equation}
As evidence for this we give in Table~\ref{tab:syl2S3} the quotient of the
actual number of fields with given Sylow 2-subgroup divided by the number
expected according to formula~(\ref{eq:2u=2}). In addition, in the last two
lines we print the relative frequency according to~(\ref{eq:2u=2}) and
according to the the original Cohen--Lenstra heuristic.

\begin{table}[htbp]
\caption{Totally real $\fS_3$-fields: Sylow 2-subgroups}  \label{tab:syl2S3}
$\begin{array}{lrrrrrrrrrr}
 D& 1& 2& 4& 2^2& 8& 4\ti2& 2^3& 16& 8\ti2& 4^2\cr  %% 8\ti4& 16\ti2
\hline
 \ge10^{10}& 1.032& .905& .885& .670& .883& .667& .32& .85& .70& .57\cr
 \ge10^{12}& 1.015& .956& .955& .829& .927& .814& .62& .91& .79& .80\cr
 \ge10^{14}& 1.008& .975& .983& .917& .969& .885& .72&1.05& .88& .78\cr
 \ge10^{16}& 1.003& .990&\!\!1.008& .964&1.009& .954& .87& .97& .86& .95\cr 
 \ge10^{17}& 1.002& .993& .997& .958& 1.001& .994& .89& .95& 1.04& .91\cr 
\hline
\text{eq.~\eqref{eq:2u=2}}\!\!\!& .852& .126& .014& .0051& .0016& .75\e3& .17\e3& .8\e4& .6\e4& .2\e4\cr
\text{\cite{CM1}}& .840& .140& .016& .0019& .0017& .29\e3& .19\e3& .3\e4& .3\e5& .2\e5\cr
\end{array}$
\end{table}

The table shows a reasonably good agreement with our prediction.

%%%!!\subsection{Complex non-Galois cubic fields}

%%%!!Here we need further computations... ??

%%%%%%%%%%%%%%%%%%%%%%%%%%%%%%%%%%%%%%%%%%%%%%%%%%%%%%%%%%%%%%%%%%%%%%%%%
\section{Cyclic cubic fields} \label{sec:Z3}

Our third set of examples concerns Sylow 2-subgroups of cyclic cubic fields
over various base fields. Here $G=Z_3$ is of order~3, $\cO=\ZZ[\mu_3]$ and
$\chi_1$ is the sum of the two non-rational linear characters of $G$.

\subsection{Cyclic cubic fields over $\QQ$}
The smallest situation, where $K_0=\QQ$, was already considered in
\cite[Sec.~2]{Ma}, where extensive computational results for 2-ranks of
class groups were presented. Here $u=1$, so according to
Proposition~\ref{prop:rankmoment} the 2-ranks of class groups should be
distributed according to
\begin{equation}\label{eq:Z32u=1r}
  \pr(\rnk_2(\Cl_K)=2r)=
    \frac{3}{2}\cdot\frac{(2)_\infty(16)_\infty}{(4)_\infty^2}
    \cdot\frac{1}{2^{r(r+2)}(4)_r}
\end{equation}
(see \cite[(1)]{Ma}), while a given 2-torsion $\cO$-module $H$ of (even) 2-rank
$2r$ should occur with probability
\begin{equation}\label{eq:Z32u=1}
  2\frac{(2)_\infty(16)_\infty}{(4)_\infty^2}\cdot
  \frac{2^{r^2}(4)_{r+1}}{|H|\cdot|\Aut_\cO(H)|}
\end{equation}
as Sylow 2-subgroup of a class group of a cyclic cubic number. As evidence
for this, we list in Table~\ref{tab:syl2Z3} the relative
proportions of certain 2-groups as Sylow 2-subgroups of class groups of
$C_3$-fields. Also, in Table~\ref{tab:syl2Z3prim} we give the corresponding
results for fields of prime conductor. The predicted values for some small
2-groups are given in the last line of Tables~\ref{tab:syl2Z3}
and~\ref{tab:syl2Z3prim} respectively (see also \cite[2(a)]{CM1}). \par

\begin{table}[htbp]
\caption{$C_3$-fields: Sylow 2-subgroups}  \label{tab:syl2Z3}
$\begin{array}{lr|rrrrrrrr}
 D& |S|& 1& 2^2& 4^2& 2^4& 8^2& 4^2\ti2^2& 2^6& 16^2\cr
\hline
 \ge10^{20}& 10^5& 1.003&  .988& 1.010&  .878& .749&  .995&  .578& 1.23\cr
 \ge10^{22}& 10^5& 1.000&  .999&  .994&  .969& .999& 1.288&  .868&  .61\cr
 \ge10^{24}& 10^5& 1.001&  .989& 1.067& 1.063& .749& 1.054& 1.157&  .31\cr
 \ge10^{26}& 10^5& 1.000& 1.002&  .996&  .963& .960& 1.024&  .868& 1.84\cr
\hline
\text{eq.~\eqref{eq:Z32u=1}}& & .853& .133& .0083& .0044& .52\e3& .34\e3& .35\e4& .33\e4\cr
\text{\cite{CM1}}& & .918& .076& .0048& .0003& .30\e3& .25\e4& .79\e7& .19\e4\cr
\end{array}$
\end{table}

\begin{table}[htbp]
\caption{$C_3$-fields of prime conductor: Sylow 2-subgroups}
  \label{tab:syl2Z3prim}
$\begin{array}{cr|rrrrrrrr}
 D& |S|& 1& 2^2& 4^2& 2^4& 8^2& 4^2\ti2^2& 2^6& 16^2\cr
\hline
 \ge10^{22}& 6\zs& 1.001&  .997&  .999& .988&  .980& 1.005& 1.051&  .96\cr
 \ge10^{28}& 10^6& 1.000& 1.002& 1.004& .979& 1.074&  .954& 1.186& 1.26\cr
 \ge10^{32}& 10^5& 1.000& 1.003&  .992& .983&  .980&  .790&  .868&  .61\cr
\end{array}$
\end{table}

\subsection{Cyclic cubic extensions of $\QQ(\sqrt{-3})$}
As a second case we have investigated cyclic cubic extensions of the complex
quadratic number field $K_0=\QQ(\sqrt{-3})$. Here, again $u=1$, so according to
Conjecture~\ref{conj:main} a given 2-torsion $\cO$-module $H$ of 2-rank $2r$
should occur with the same probability~\eqref{eq:Z32u=1} as in the previous
case. Table~\ref{tab:syl2Z3sqrt-3} gives results on this case by listing the
quotient of the observed densities by the predicted density, for sets of
$10^5$ fields of discriminant at least $10^i$, for $i\in\{16,20,24\}$. Again,
the data seem in agreement with our conjecture.

\begin{table}[htbp]
\caption{$C_3$-fields over $\QQ(\sqrt{-3})$: Sylow 2-subgroups}
\label{tab:syl2Z3sqrt-3}
$\begin{array}{lr|rrrrrrrr}
 D& |S|& 1& 2^2& 4^2& 2^4& 8^2& 4^2\ti2^2& 2^6& 16^2\cr
\hline
 \le10^{14}& 499\,815& 1.034& .820& .817& .307& .815& .316& 0& .615\cr
\hline
 \ge10^{16}& 10^5& 1.018& .901& .951& .546& .999& .732& 0& .307\cr
 \ge10^{20}& 10^5& 1.007& .967& .965& .780& .922& .732& 1.157& .921\cr
 \ge10^{24}& 10^5& 1.002& .991& .987& .949& .922& .966& 1.157& .615\cr
\hline
\text{eq.~\eqref{eq:Z32u=1}}& & .853& .133& .0083& .0044& .52\e3& .34\e3& .35\e4& .33\e4\cr
\end{array}$
\end{table}

\subsection{Cyclic cubic extensions of $\QQ(\sqrt{5})$}
In the case of a real quadratic base field $K_0$ we have $u=2$, so
Proposition~\ref{prop:rankmoment} predicts the distribution
\begin{equation}\label{eq:Z32u=2r}
  \pr(\rnk_2(\Cl_K)=2r)=
    \frac{27}{16}\cdot\frac{(2)_\infty(16)_\infty}{(4)_\infty^2}
    \cdot\frac{1}{2^{r(r+4)}(4)_r}
\end{equation}
for the 2-ranks of class groups, with higher moments
$$\prod_{k=1}^n(1+2^{2k-5}).$$
More precisely, Conjecture~\ref{conj:main}  predicts that a 2-torsion
$\cO$-module $H$ of 2-rank $2r$ should occur with probability
\begin{equation}\label{eq:Z32u=2}
  \frac{12}{5}\cdot\frac{(2)_\infty(16)_\infty}{(4)_\infty^2}
  \cdot\frac{2^{r^2}(4)_{r+2}}{|H|^2\cdot|\Aut_\cO(H)|}
\end{equation}
as Sylow 2-subgroup of a class group of a cyclic cubic number. Data for the
case $K_0=\QQ(\sqrt{5})$ are listed in
Tables~\ref{tab:2-rnkZ3sqrt5} and~\ref{tab:syl2Z3sqrt5}.

\begin{table}[htbp]
\caption{$C_3$-fields over $\QQ(\sqrt{5})$: 2-ranks and higher moments}
\label{tab:2-rnkZ3sqrt5}
$\begin{array}{lr|rrr|rrrr}
 D& |S|& r=0& 2& 4& n=1& 2& 3& 4\cr
\hline
 \le10^{16}& 236\,832& .9672& .0327& .11\e3& 1.100& 1.518& 3.51& 16.5\cr
\hline
%% \ge10^{18}&     10^5& .9623& .0375& .21\e3& 1.116& 1.616& 4.22& 24.3\cr
%% \ge10^{22}&     10^5& .9613& .0383& .39\e3& 1.121& 1.674& 5.01& 36.3\cr
 \ge10^{20}&     10^5& .9627& .0370& .30\e3& 1.115& 1.631& 4.56& 30.1\cr
 \ge10^{24}&     10^5& .9596& .0401& .27\e3& 1.124& 1.670& 4.63& 28.9\cr
 \ge10^{28}&     10^5& .9594& .0402& .34\e3& 1.126& 1.690& 4.93& 33.5\cr
\hline
\span\text{eq.~\eqref{eq:Z32u=2r}}& .9597& .0400& .33\e3& 1.125& 1.687& 5.06& 45.6\cr
\span\text{CL-prediction}& .9793& .0207& .02\e3& 1.062& 1.316& 2.39& 7.7\cr
\end{array}$
\end{table}

\begin{table}[htbp]
\caption{$C_3$-fields over $\QQ(\sqrt{5})$: Sylow 2-subgroups}
\label{tab:syl2Z3sqrt5}
$\begin{array}{lrrrrrrrrr}
 D& 1& 2^2& 4^2& 2^4& 8^2& 4^2\ti2^2& 2^6\cr
\hline
 \le10^{16}& 1.008&  .816& .906& .336& 1.318& 0& 0\cr
\hline
 \ge10^{20}& 1.003&  .923& 1.073&  .918&     0&     0& 0\cr
 \ge10^{24}& 1.000& 1.003& 1.008&  .826& 2.081&     0& 0\cr
 \ge10^{28}& 1.000& 1.005& 1.089& 1.010& 2.081& 1.567& 0\cr
\hline
\text{eq.~\eqref{eq:Z32u=2}}& .960& .039& .61\e3& .33\e3& .96\e5& .64\e5& .65\e6\cr
\end{array}$
\end{table}

\subsection{Cyclic cubic extensions of $\QQ(\sqrt{-1})$}  \label{sec:6.4}
Now we choose the base field $K_0=\QQ(\sqrt{-1})$ containing the 4th roots
of unity. The relevant unit rank equals $u=1$. This situation is not covered
by the predictions made in Section~\ref{sec:conj}. Still, the data in
Table~\ref{tab:2-rnkZ3sqrt-1} seems to confirm that the 2-ranks behave
according to formula~(\ref{eq:Z32u=1r}). On the other hand, the distribution of
individual Sylow 2-subgroups shown in Table~\ref{tab:syl2Z3sqrt-1} does
{\em not} seem to follow the formulas from~(\ref{eq:Z32u=1}).

%It would be interesting to come up with a reasonable conjecture in this case.

\begin{table}[htbp]
\caption{$C_3$-fields over $\QQ(\sqrt{-1})$: 2-ranks and higher moments}
\label{tab:2-rnkZ3sqrt-1}
$\begin{array}{lr|rrrr|rrrr}
 D& |S|& r=0& 2& 4& 6&  n=1& 2& 3& 4\cr
\hline
 \le10^{15}& 227\,756& .8642& .1327& .303\e2&     0& 1.444& 3.76& 21.7& 233\cr
\hline
%% \ge10^{16}&     10^5& .8592& .1373& .352\e2&     0& 1.465& 3.96& 24.1& 267\cr
%% \ge10^{19}&     5\zf& .8563& .1394& .426\e2& .3\e4& 1.484& 4.29& 34.6& 785\cr
 \ge10^{20}&     5\zf& .8555& .1403& .419\e2& .1\e4& 1.485& 4.23& 30.7& 546\cr
 \ge10^{24}&     5\zf& .8541& .1412& .470\e2& .2\e4& 1.496& 4.42& 35.4& 747\cr
 \ge10^{28}&     5\zf& .8533& .1419& .473\e2& .5\e4& 1.499& 4.52& 41.4& 1119\cr
 \ge10^{32}&     4\zf& .8527& .1425& .472\e2& .5\e4& 1.502& 4.56& 43.1& 1227\cr
\hline
\span\text{eq.~\eqref{eq:Z32u=1r}}& .8530& .1422& .474\e2& .4\e4& 1.500& 4.50& 40.5& 1336\cr
\cr
\end{array}$
\end{table}

\begin{table}[htbp]
\caption{$C_3$-fields over $\QQ(\sqrt{-1})$: Sylow 2-subgroups}
\label{tab:syl2Z3sqrt-1}
$\begin{array}{lrrrrrrrrr}
 D& 1& 2^2& 4^2& 2^4& 8^2& 4^2\ti2^2& 2^6& 16^2& 8^2\ti2^2\cr
\hline
 \le10^{15}& .864& .115& .016& .25\e2& .12\e2& .50\e3&     0& .7\e4& .6\e4\cr
\hline
 \ge10^{16}& .859& .120& .016& .30\e2& .13\e2& .42\e3&     0& .2\e3& .4\e4\cr
%% \ge10^{20}& .855& .123& .017& .36\e2& .11\e2& .54\e3& .1\e4& .8\e4& .3\e4\cr
 \ge10^{24}& .854& .123& .017& .40\e2& .11\e2& .64\e3& .1\e4& .7\e4& .3\e4\cr
%% \ge10^{28}& .853& .124& .017& .40\e2& .11\e2& .63\e3& .4\e4& .6\e4& .2\e4\cr
 \ge10^{32}& .853& .125& .017& .40\e2& .10\e2& .62\e3& .4\e4& .6\e4& .4\e4\cr
\hline
\text{eq.~\eqref{eq:Z32u=1}}& .853& .133& .0083& .44\e2& .52\e3& .34\e3& .35\e4& .33\e4& .21\e4\cr
\text{\cite{CM1}}& .918& .076& .0048& .03\e2& .30\e3& .25\e4& .79\e7& .19\e4& .16\e4\cr
\end{array}$
\end{table}

%%%%%%%%%%%%%%%%%%%%%%%%%%%%%%%%%%%%%%%%%%%%%%%%%%%%%%%%%%%%%%%%%%%%%%%%%
\section{$D_5$-extensions of $\QQ$}\label{sec:D5}

The fourth test case consists of quintic extensions of $\QQ$ with dihedral
Galois group $G=D_5$. Here, $\cO=\QQ(\sqrt{5})$ and $\chi_1$ is the sum of the
two non-rational characters of $G$ of degree~2. Again, the behaviour of the
prime $p=2$ is interesting. \par
Here, in contrast to the previous cases we don't have a fast method to
enumerate \emph{all} $D_5$-fields between given discriminant bounds nor is
there a proven asymptotic formula for the number of
such fields. Nevertheless, assuming the Cohen--Lenstra heuristic for the 5-rank
of quadratic fields, an obvious asymptotic lower bound for the number of
fields is obtained by just counting those fields whose Galois closure is
unramified over the quadratic subfield. According to this, for large $X$ there
should exist at least $0.07599\,\sqrt{X}$ complex quintic $D_5$-fields
of discriminant at most $X$, and at least $0.01507\,\sqrt{X}$ totally real
ones. \par
We have produced large sets of fields by specializing the $D_5$-polynomial
\begin{align*}
 X^5-2\,vX^4&-u(5u^2-10uv+4v^2)X^2+2u^2(5u-4v)(u-v)X\\
   &-4u^3(u-v)^2-X^2(X-u)t\ \in \QQ(u,v,t)[X]
\end{align*}
for integral $|u|,|v|\le 2500$ with $\gcd(u,v)=1$, and $|t|\le 50000$. Of
these several billion fields, in both possible signatures we retained the first
200,000 ones of discriminant at least $10^i$, where $15\le i\le 21$. A priori
there is no reason why the class groups of the fields obtained in this way
should show the same behaviour as class groups of random $D_5$-fields. Thus
our tables here should be taken with even more care than those in the previous
examples.

\subsection{Non-real $D_5$-extensions of $\QQ$}
For complex $D_5$-extensions we obtain $u=1$, so according to
Conjecture~\ref{conj:main} the (necessarily even) 2-ranks of class groups
should be distributed according to the probability in
formula~\eqref{eq:Z32u=1r}, that is to say, as in the case of cyclic cubic
fields.
\par
Despite the fact that our lists of fields are not complete, it turns out
that the distributions of 2-ranks given in Table~\ref{tab:2-rnkD5compl} is
not too far away from the prediction~\eqref{eq:Z32u=1r}.

\begin{table}[htbp]
\caption{Non-real $D_5$-fields over $\QQ$: 2-ranks and higher moments}
\label{tab:2-rnkD5compl}
$\begin{array}{lr|rrrr|llr}
 D& |S|& r=0& 2& 4& 6&  n=1& 2& 3\cr
\hline
 \le10^{15}& 1\,183\,056& .9266& .0724& .097\e2& .8\e6& 1.232& 2.34& 9.7\cr
\hline
 \ge10^{17}& 200\,000& .9110& .0869& .216\e2& .05\e4& 1.293& 2.87& 16.6\cr
 \ge10^{19}& 200\,000& .9049& .0922& .290\e2& .55\e4& 1.324& 3.35& 33.1\cr
 \ge10^{21}& 200\,000& .8992& .0972& .354\e2& .20\e4& 1.346& 3.46& 28.2\cr
\hline
\span\text{eq.~\eqref{eq:Z32u=1r}}& .8530& .1422& .474\e2& .38\e4& 1.5& 4.5& 40.5\cr
\end{array}$
\end{table}

\subsection{Totally real $D_5$-extensions of $\QQ$}
For totally real $D_5$-extensions we obtain $u=2$, so according to
Conjecture~\ref{conj:main} the 2-ranks of class groups should be distributed
according to formula~\eqref{eq:Z32u=2r}.

\begin{table}[htbp]
\caption{Totally real $D_5$-fields over $\QQ$: 2-ranks and higher moments}
\label{tab:2-rnkD5real}
$\begin{array}{lr|rrrr|rrr}
 D& |S|& r=0& 2& 4& 6&  n=1& 2& 3\cr
\hline
 \le10^{14}& 147\,683& .9876& .0124& .14\e4& 0& 1.037& 1.19& 1.84\cr
\hline
 \ge10^{17}& 200\,000& .9789& .0210& .85\e4& 0& 1.064& 1.34& 2.67\cr
 \ge10^{21}& 200\,000& .9721& .0276& .19\e3& 0& 1.086& 1.46& 3.54\cr
\hline
\span\text{eq.~\eqref{eq:Z32u=2r}}& .9597& .0400& .33\e3& .66\e6& 1.125& 1.69& 5.06\cr
\end{array}$
\end{table}

\goodbreak
%%%%%%%%%%%%%%%%%%%%%%%%%%%%%%%%%%%%%%%%%%%%%%%%%%%%%%%%%%%%%%%%%%%%%%%%%


\begin{thebibliography}{31}

\bibitem{Ac06}
{\sc J. Achter}, The distribution of class groups of function fields.
  J. Pure Appl. Algebra {\bf204} (2006), 316--333. 

\bibitem{Ac08}
{\sc J. Achter}, Results of Cohen-Lenstra type for quadratic function fields.
  Pp.~1--7 in: \emph{Computational arithmetic geometry},  Contemp. Math., 463,
  Amer. Math. Soc., Providence, RI, 2008.

\bibitem{An}
{\sc G. E. Andrews}, \emph{The theory of partitions}. Encyclopedia of
  Mathematics and its Applications, Vol. 2. Addison-Wesley, Reading, 1976.

\bibitem{CDO}
{\sc H. Cohen, F. Diaz y Diaz, M. Olivier}, On the density of discriminants
  of cyclic extensions of prime degree. J. Reine Angew. Math. {\bf550} (2002),
  169--209.

\bibitem{CM1}
{\sc H. Cohen, J. Martinet}, Class groups of number fields: numerical
  heuristics. Math. Comp. {\bf48} (1987), 123--137. 

\bibitem{CM2}
{\sc H. Cohen, J. Martinet}, \'Etude heuristique des groupes de classes des
  corps de nombres. J. Reine Angew. Math. {\bf404} (1990), 39--76.

\bibitem{Fu00}
{\sc J. Fulman}, A probabilistic approach to conjugacy classes in the finite
  symplectic and orthogonal groups. J. Algebra {\bf234} (2000),  207--224. 

\bibitem{Ma}
{\sc G. Malle}, Cohen--Lenstra heuristic and roots of unity. J. Number Theory
  {\bf128} (2008), 2823--2835.

\bibitem{Ro}
{\sc D. Roberts}, Density of cubic field discriminants. Math. Comp. {\bf70}
  (2001), 1699--1705.

\end{thebibliography}
\end{document}